\documentclass[12pt]{elsarticle} 
\usepackage{graphicx}
\usepackage{amsmath}
\usepackage{bm}
\usepackage{amssymb}
\usepackage{amsfonts}
\usepackage{amsthm}
\usepackage{placeins}
\usepackage{afterpage}
\newtheorem{Theorem}{Theorem}[section]
\newtheorem{Lemma}{Lemma}[section]

\newtheorem{Definition}{Definition}[section]
\newtheorem{Proposition}{Proposition}[section]
\def\ee{\end{eqnarray*}}
\def\be{\begin{eqnarray*}}
\def\bee{\end{eqnarray}}
\def\bbe{\begin{eqnarray}}
\def\ea{\end{align*}}
\def\ba{\begin{align*}}
\setcounter{equation}{0}
\makeatletter
\let\today\relax
\def\ps@pprintTitle{%
    \let\@oddhead\@empty
    \let\@evenhead\@empty
    \def\@oddfoot{\footnotesize\itshape
      \hfill\today}
    \let\@evenfoot\@oddfoot
    }
\makeatother

\makeatletter
\newcommand{\bal}{\@ifstar{\@bals}{\@bal}}
\def\@bals#1\eal{\begin{align*}#1\end{align*}}
\def\@bal#1\eal{\begin{align}#1\end{align}}
\makeatletter
\def\A{\mathcal{A}} 

\def\E{E} 



\def\L{ { {\mathcal A}^\kappa } }

    \def\p{\partial}

\DeclareMathOperator*{\esssup}{ess\,sup}

\makeatletter
\newcommand{\subjclass}[2][1991]{%
  \let\@oldtitle\@title%
  \gdef\@title{\@oldtitle\footnotetext{#1 \emph{Mathematics subject classification.} #2}}%
}
\makeatother

\begin{document}

\title{$L^2$ decay estimates of weak solutions to 3D fractional MHD equations in  exterior domains  }

\author{Zhi-Min Chen}
 \ead{zmchen@szu.edu.cn}
\author{Bo-Qing Dong}\ead{bqdong@szu.edu.cn}
\author{Qiuyue Zhang}\ead{qyzhang0722@163.com}
\address{School  of Mathematical Sciences, Shenzhen University, Shenzhen 518060,  China}%


\begin{abstract} Consider three-dimensional  fractional MHD equations  in an exterior domain with the Dirichlet boundary condition  assumed. Asymptotic behaviours of weak solutions to the three-dimensional  exterior fractional MHD equations  are studied.  $L^2$ decay estimates of the weak solutions are obtained.

 \begin{keyword}Fractional MHD equations, asymptotic behaviours, weak solutions, exterior domains.
 \end{keyword}
\end{abstract}
\subjclass[2020]{35B40, 35D30,  35Q35, 76D99.}

\maketitle

\section{Introduction}

The traditional form of incompressible magnetohydrodynamic (MHD) equations is expressed as (cf. \cite{LL, T_MHD})
\bbe
\p_t u -\Delta u +\nabla (p+\frac12 |B|^2) = -u\cdot \nabla u + B\cdot \nabla B, \label{mag1}
\\
\p_t B-\Delta B =B\cdot \nabla u- u\cdot \nabla B, \label{mag} 
\\
\nabla\cdot u=0,\,\,\, \nabla \cdot B=0,\label{mag3}
\bee
for unknown velocity field $u=(u_1,u_2,u_3)$, magnetic field $B=(B_1,B_2,B_3)$ and pressure $p$.

Mathematical investigations of the MHD equations are mainly assuming  the fluid motion in  whole spaces so that Fourier transform can be adopted. If the fluid domain $\Omega$ involves  a boundary $\p\Omega\ne \emptyset$, mathematical difficulties arise in the understanding of the magnetic equation (\ref{mag}) (cf. \cite{T_MHD, Duan}), since a suitable boundary condition with respect to the  divergence-free condition  $\nabla \cdot B=0$  and   the integral involving the  Laplacian  $-\Delta B = \nabla \times \nabla\times  B$  is
\bbe (\nabla \times B)\times n=0 \mbox{ and } \,\, B\cdot n=0 \mbox{ on } \p\Omega,\bee
 where $n$ the normal vector field of $\partial \Omega$.

In the present study, the smooth fluid domain $\Omega$   is exterior to the bounded domain $R^3\setminus \Omega$. For simplicity of analysis,  the Dirichlet boundary condition
\bbe u=0,\,\, B=0 \,\mbox{ on }\, \p \Omega, \bee
is adopted and thus  the Stokes operator
\be \A= -P\Delta \ee
 with the domain
\be  D(\A)= \left\{ w\in W^{2,2}(\Omega)^3;\,\,\nabla\cdot w=0,\,\,  w|_{\partial \Omega}=0\right\},\ee
is assumed. Here   $P$ is the Helmholtz projection operator mapping $L^2(\Omega)^3$ onto the space
\be L^2_\sigma (\Omega)^3 =\mbox{ the completion of $C^\infty_{0,\sigma}(\Omega)^3$ in $L^2(\Omega)^3$},
\ee
where $C^\infty_{0,\sigma}(\Omega)^3$ denotes the set of all divergence-free and compactly supported   smooth vector fields on $\Omega$.

The corresponding exterior  MHD problem with respect to  the Dirichlet boundary condition  was studied by Liu and Han \cite{Han}. However there is little literature on the exterior MHD problem involving fractional Stokes operators in exterior domains.
Therefore, it is the purpose of the present study to provide a primary  understanding  of the fractional  MHD equations
\begin{eqnarray}
\partial _t  u  +\A^\alpha  u &=&-P( u \cdot \nabla  u ) +P( B \cdot \nabla  B ),   \label{eq11}
\\ \partial _t  B  +\A^\beta  B &=&  - P(u\cdot \nabla B)+P(B\cdot \nabla u) \label{eq1} 
\end{eqnarray}
in the exterior domain $\Omega$,
for $0 <\alpha,\, \beta \le 1$. The fractional operators are denoted  by the spectral presentation
\bbe \A^\kappa   =\int^\infty_0 \lambda^\kappa  d E_\lambda, \bee
with the domain
\bbe  D(\A^\kappa)=\mbox{ the completion of $D(\A)$ under the norm $\|\A^\kappa u\|_{L^2} +\|u\|_{L^2}$}.\bee
Here  $E_\lambda$ represents the spectral resolution of the unit determined by the  operator $\A$ (see Yosida \cite[page 313, Theorem 1]{Yosida}).

\begin{Definition}
 Let $(u_0,B_0) \in L^2_\sigma(\Omega)^3\times L^2_\sigma(\Omega)^3$ and  $0<\alpha,\,\beta   \le 1$. A vector field  $( u , B)$  is called a weak solution of (\ref{eq11})-(\ref{eq1}) associated with the initial condition
 \bbe  u|_{t=0}=u_0,\,\,\, B|_{t=0}=B_0,\bee
  if, for given $T>0$, $(u,B) $ satisfies the following the conditions
\begin{eqnarray*} ( u ,B ) \in L^\infty\left(0,T; L^2_\sigma(\Omega)^3\times L^2_\sigma(\Omega)^3\right)\cap L^2(0,T; D(\A^{\frac\alpha 2})\times D(\A^{\frac\beta 2})),\end{eqnarray*}
\begin{eqnarray}
\lefteqn{ \int^T_0 (-\langle   u (t),\partial _t\phi(t)\rangle + \langle \A^{\frac\alpha 2} u (t),\A^{\frac\alpha 2}\phi(t)\rangle )dt -\langle u (0),\phi(0)\rangle} \nonumber
\\&=& \int^T_0 \langle  u (t)\otimes  u (t),\nabla \phi(t)\rangle dt-\int^T_0 \langle  B (t)\otimes  B(t),\nabla \phi(t)\rangle dt, \label{w1}
\end{eqnarray}
and
\begin{eqnarray}\lefteqn{ \int^T_0(- \langle  B (t),\partial _t \psi (t)\rangle + \langle \A^{\frac \beta2} B (t), \A^{\frac\beta2} \psi (t)\rangle  )dt-\langle  B (0), \psi (0)\rangle } \nonumber
\\
&=& \int^T_0 \langle  u (t)\otimes  B (t),\nabla \phi(t)\rangle dt-\int^T_0 \langle  B (t)\otimes  u(t),\nabla \phi(t)\rangle dt,\label{w2}
\end{eqnarray}
 where $\langle\cdot,\cdot \rangle$ represents the inner product of $L^2_\sigma (\Omega)^3$ and $(\phi,\psi)$ denotes   a test function  in
 \be C^1([0,T]; C_{0,\sigma}^\infty(\Omega)^3\times  C^\infty_{0,\sigma}(\Omega)^3) \mbox{ with $\phi(T)=0$ and $ \psi(T)=0$.}\ee
\end{Definition}

We are now in the position to state the main result.
\begin{Theorem}\label{Th1} Let $\Omega$ be a smooth exterior domain of $R^3$, $\frac34 < \alpha, \,\beta \le 1$ and  $(u_0,B_0) \in L^2_\sigma(\Omega)^3\times L^2_\sigma(\Omega)^3$. Then   (\ref{eq11})-(\ref{eq1}) admit  a global weak solution so that
\begin{eqnarray}\|(u(t),B(t))\|_{L^2} \to 0  \,\,\mbox{ as }\,\, t\to \infty.\label{ass1}\end{eqnarray}

Moreover, if the linear analytic semigroup has the  algebraic decay property
\begin{eqnarray}  \|(e^{-t\A^\alpha} u_0 , e^{-t\A^\beta} B_0)\|_{L^2}\le Ct^{-\gamma},\,\,\,\mbox{ for }\,\, t>0,\label{con1}\end{eqnarray}
for  $0<\gamma <\frac12$ when $\max\{ \alpha,\beta\}=1$ and  $0<\gamma \le \frac12$ when $\max\{ \alpha,\beta\}<1$,
then the nonlinear solution $(u,B)$ has the same  algebraic decay property
\begin{eqnarray}\|(u(t),B(t))\|_{L^2} \le Ct^{-\gamma},\,\,\, \mbox{ for }\,\,t>0. \label{ass2}\end{eqnarray}
\end{Theorem}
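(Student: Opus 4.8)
The plan is to construct a global weak solution by Galerkin approximation, to extract the basic energy law from the MHD cancellation structure, and then to convert that law into the decay statements (\ref{ass1}) and (\ref{ass2}) through a spectral analogue of the Fourier splitting method, in which the spectral resolution $E_\lambda$ of $\A$ plays the role of the Fourier transform that is unavailable on the exterior domain $\Omega$.

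First I would set up the approximate problem and pass to the limit, the decisive point being the MHD cancellation. Testing (\ref{eq11}) against $u$ and (\ref{eq1}) against $B$, the self-advection terms $\langle u\cdot\nabla u,u\rangle$ and $\langle u\cdot\nabla B,B\rangle$ vanish by $\nabla\cdot u=0$, while the two coupling terms satisfy $\langle B\cdot\nabla B,u\rangle+\langle B\cdot\nabla u,B\rangle=0$ after integration by parts using $\nabla\cdot B=0$ and the Dirichlet condition on $\p\Omega$. This yields the energy inequality
\[
\frac12\frac{d}{dt}\left(\|u\|_{L^2}^2+\|B\|_{L^2}^2\right)+\|\A^{\frac\alpha2}u\|_{L^2}^2+\|\A^{\frac\beta2}B\|_{L^2}^2\le 0,
\]
so that $E(t):=\|u(t)\|_{L^2}^2+\|B(t)\|_{L^2}^2$ is nonincreasing and the dissipation is integrable in time. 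The hypothesis $\alpha,\beta>\frac34$ enters precisely here: by the embedding $D(\A^{\frac\kappa2})\hookrightarrow L^4(\Omega)^3$ valid for $\kappa>\frac34$, the quadratic terms $u\otimes u$, $B\otimes B$, $u\otimes B$ lie in $L^2$ with norms controlled by the dissipation, which is what is needed both for compactness in the construction and for the nonlinear estimates below.

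For the qualitative decay (\ref{ass1}) I would use spectral splitting. Writing the dissipation through the spectral measure and cutting at frequency $\rho$, one has $\|\A^{\frac\kappa2}w\|_{L^2}^2\ge\rho^\kappa(\|w\|_{L^2}^2-\|E_\rho w\|_{L^2}^2)$; since the relevant regime is $\rho\le1$, in which the larger exponent produces the smaller coefficient, the energy inequality becomes $\frac{d}{dt}E+2\rho^{\Theta}E\le2\rho^{\Theta}G(t)$ with $\Theta=\max\{\alpha,\beta\}$ and $G(t)=\|E_\rho u\|_{L^2}^2+\|E_\rho B\|_{L^2}^2$ the low-frequency energy. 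For fixed $\rho$ this is an ODE comparison forcing $\limsup_{t\to\infty}E(t)\le\limsup_{t\to\infty}G(t)$, so it suffices to show $G(t)\to0$. That I would obtain from Duhamel's formula: the linear contribution $E_\rho e^{-t\A^\alpha}u_0$ tends to $0$ in $L^2$ because $\A$ carries no spectral mass at $\lambda=0$, and the Duhamel integral of the nonlinearity is controlled by the time-integrable dissipation together with the $L^4$ bound above.

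The rate (\ref{ass2}) is the core of the argument and the step I expect to be the main obstacle. Choosing $\rho^{\Theta}\sim k/t$ turns the splitting inequality into $\frac{d}{dt}(t^{k}E)\le kt^{k-1}G(t)$, so everything reduces to bounding the low-frequency energy $G(t)$ by $Ct^{-2\gamma}$. The linear part of $G$ obeys this by the standing assumption (\ref{con1}). The nonlinear part is the delicate point: estimating the Duhamel integral, I would move the divergence onto the semigroup via $\|E_\rho e^{-(t-s)\A^\alpha}P\nabla\cdot F\|_{L^2}\lesssim\rho^{\frac12}\|\A^{-\frac12}P\nabla\cdot F\|_{L^2}\lesssim\rho^{\frac12}\|F\|_{L^2}$, then control $\|F\|_{L^2}$ by interpolation between $E$ and the dissipation, and finally run a bootstrap on $t^{2\gamma}E(t)$. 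Two difficulties dominate: first, justifying the low-frequency nonlinear bound uniformly in $\rho$ on the exterior domain, where no explicit kernel for $e^{-t\A^\alpha}$ is available and one must rely solely on the spectral calculus and the boundedness of $\A^{-\frac12}P\nabla\cdot$; and second, the borderline on $\gamma$, which arises because the resulting time integral for the nonlinear self-interaction is convergent only up to $\gamma=\frac12$, with a logarithmic divergence exactly when $\Theta=\max\{\alpha,\beta\}=1$, giving the strict inequality $\gamma<\frac12$ in that case and the closed range $\gamma\le\frac12$ when $\Theta<1$.
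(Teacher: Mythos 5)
Your route is genuinely different from the paper's. You adapt Schonbek's Fourier splitting to the exterior domain by cutting the spectral measure $E_\lambda$ of $\A$ at a level $\rho$, whereas the paper never splits frequencies at all: it exploits the monotonicity of $\|(u,B)(t)\|_{L^2}$ coming from the energy inequality (\ref{energy}) to write $\|(u,B)(t)\|_{L^2}\le \frac1t\int_0^t\|(u,B)(s)\|_{L^2}\,ds$ as in (\ref{decay5}), bounds the time-average of the Duhamel formula by H\"older together with the global dissipation bound and the $L^4$ interpolation (\ref{n39}), obtains the first rate $t^{-1/(4\alpha)}$ in (\ref{decay1}), and then bootstraps the exponent; the fixed point of the iteration is exactly $\frac12$ when $\max\{\alpha,\beta\}=1$ and exceeds $\frac12$ otherwise, which is precisely where the hypothesis on $\gamma$ in (\ref{con1}) comes from. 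The time-averaging argument buys you freedom from any low-frequency analysis of $E_\lambda$, which is the notoriously delicate point in exterior domains; your splitting argument, if completed, would in principle be closer to the whole-space theory and could give decay without assuming monotonicity, but it must pay for the absence of a pointwise bound on the ``Fourier side.''

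There is a concrete gap in your proof of (\ref{ass1}). With $\rho$ \emph{fixed}, the conclusion $\limsup_{t\to\infty}E(t)\le\limsup_{t\to\infty}G(t)$ is useless unless you can show $G(t)=\|E_\rho u(t)\|_{L^2}^2+\|E_\rho B(t)\|_{L^2}^2\to0$, and your own estimate does not give this: the Duhamel bound yields $\|E_\rho u(t)\|_{L^2}\le\|E_\rho e^{-t\A^\alpha}u_0\|_{L^2}+C\rho^{1/2}\int_0^t\|F(s)\|_{L^2}\,ds$, and with only the energy inequality one has $\int_0^t\|F\|_{L^2}\,ds\lesssim t^{1-\frac3{4\alpha}}\to\infty$, so for fixed $\rho$ the nonlinear contribution is unbounded (and bounding $\|E_\rho u(t)\|$ by $\|u(t)\|$ is circular). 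The argument can be repaired by taking $\rho=\rho(t)\sim(k/t)^{1/\Theta}\to0$ from the start, using $\|E_{\rho(t)}e^{-t\A^\alpha}u_0\|_{L^2}^2\le\int_0^{\rho(t)}d\|E_\lambda u_0\|_{L^2}^2\to0$ (no point mass of $\A$ at $0$) for the linear part and checking that $\rho(t)^{1/2}t^{1-\frac3{4\alpha}}\to0$ for $\alpha,\beta>\frac34$, but that is a different argument from the one you wrote. The second unresolved point is the one you flag yourself: closing the bootstrap for $t^{2\gamma}E(t)$ up to the endpoint $\gamma=\frac12$ (respectively $\gamma<\frac12$ when $\max\{\alpha,\beta\}=1$) requires tracking how the exponent improves at each step and showing the iteration reaches $\gamma$ in finitely many steps; this is exactly the computation the paper carries out explicitly with its sequences $a_n,b_n,c_n$, and without it the restriction on $\gamma$ in the statement is not actually derived.
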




Theorem \ref{Th1} will be proved by employing the spectral representation of the exterior Stokes operator in the $L^2$ space.   The exterior Navier-Stokes flows and related problems   were studied by   Masuda \cite{1975Masuda}, Miyakawa \cite{Miya82},  Borchers and Miyakawa \cite{Miya1990,Miya1992, Miya1995}, Chen \cite{Chen1993} and Chen, Kagei and Miyakawa \cite{Chen1992}. Especially, when $B=0$ and $\alpha=1$, Theorem \ref{Th1} reduces to the $L^2$ decay result of  the exterior Navier-Stokes flows given by Miyakawa \cite{Miya1992}.
$L^2$ decay estimates of weak solutions to the
MHD equations ($\alpha=\beta=1$) in  whole spaces were  studied by Schonbek {\it et al} \cite{SSE96}
by applying the classic Fourier splitting method from Schonbek  \cite{Sc85}.

The proof of Theorem \ref{Th1} is shown in  the following two sections. Section 2 is a collection of preliminary lemmas for the proof. Section 3  consists of three subsections: existence of weak solutions, proof of assertion (\ref{ass1}) and proof of assertion (\ref{ass2}).

\section{Preliminaries}
Let $C$ and $C_n$ be generic constants, which are  independent of the quantities $ u $, $ B $, $\phi$, $\psi$,  $t>0$ and $x\in \Omega$, but they may depend on initial vector field $(u_0,B_0)$.  However, $C_n$  depends on the integer $n\ge 1$, while $C$ is independent of $n\ge 1$.

Following Sobolev imbedding is fundamental important in our analysis.
\begin{Lemma}\label{Miya} (Miyakawa \cite[Theorem 2.4]{Miya82})
Let $0\le \kappa \le 1$  and $H^\frac\kappa2$  denote the completion of $D(\A)$ with respect to the norm
$\|\A^\frac\kappa2 w\|_{L^2}$.
 Then the Sobolev imbedding
\bbe \|w\|_{L^r} \le 2\|\A^\frac\kappa2w\|_{L^2},\,\, \mbox{ for } w\in H^\frac\kappa2, \bee
holds true with respect to \be\frac1r = \frac{1-\kappa}2 + \frac\kappa 6.\ee
\end{Lemma}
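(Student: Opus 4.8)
The plan is to prove the estimate at the two endpoints $\kappa=0$ and $\kappa=1$ and then recover the whole range $0<\kappa<1$ by interpolation, using that $\A$ is a nonnegative self-adjoint operator on $L^2_\sigma(\Omega)^3$ whose spectral resolution $\A^\kappa=\int_0^\infty\lambda^\kappa\,dE_\lambda$ is already available. At $\kappa=0$ one has $r=2$ and $\A^0$ is the identity, so the inequality is the trivial $\|w\|_{L^2}\le 2\|w\|_{L^2}$. At $\kappa=1$ one has $r=6$ and the claim is the critical Sobolev embedding $\|w\|_{L^6}\le 2\|\A^{1/2}w\|_{L^2}$, which I would establish first.

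For the endpoint $\kappa=1$ I would identify the square-root norm with the Dirichlet norm. For $w\in D(\A)$, since $\A=-P\Delta$ with $P$ the orthogonal Helmholtz projection, $\nabla\cdot w=0$ and $w|_{\p\Omega}=0$, self-adjointness of $\A^{1/2}$ and one integration by parts give $\|\A^{1/2}w\|_{L^2}^2=\langle\A w,w\rangle=\langle-\Delta w,w\rangle=\|\nabla w\|_{L^2}^2$, the projection dropping out because $w=Pw$. Extending $w$ by zero across $\p\Omega$ — legitimate because $w\in W^{1,2}_0(\Omega)^3$ — yields $\tilde w\in W^{1,2}(R^3)^3$ with unchanged $L^6$ and $\|\nabla\cdot\|_{L^2}$ norms, so the classical three-dimensional inequality $\|\tilde w\|_{L^6(R^3)}\le C_S\|\nabla\tilde w\|_{L^2(R^3)}$ applies. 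Talenti's sharp constant here is $C_S\approx 0.43<1$, so $\|w\|_{L^6}\le 2\|\A^{1/2}w\|_{L^2}$ holds on $D(\A)$ and, by density, on $H^{1/2}$.

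Next I would interpolate, viewing the embedding as a single operator on the complex-interpolation couples $(L^2_\sigma,D(\A^{1/2}))$ on the source side and $(L^2,L^6)$ on the target side. Calderón's theorem gives $[L^2,L^6]_\kappa=L^r$ isometrically with $1/r=(1-\kappa)/2+\kappa/6$, while the spectral theorem unitarily diagonalizes $\A$ and reduces $[L^2_\sigma,D(\A^{1/2})]_\kappa$ to an interpolation of weighted $L^2$-spaces, which the complex method identifies isometrically with $D(\A^{\kappa/2})$ equipped with $\|\A^{\kappa/2}\cdot\|_{L^2}$. The interpolation theorem then bounds the norm of the embedding by the geometric mean of the endpoint norms,
\[
\|w\|_{L^r}\le 1^{1-\kappa}\,C_S^{\kappa}\,\|\A^{\kappa/2}w\|_{L^2}\le 2\|\A^{\kappa/2}w\|_{L^2}\qquad(0\le\kappa\le 1),
\]
first on $D(\A)$ and then, by density, on the completion $H^{\kappa/2}$, which is the assertion.

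The main obstacle is the scaling-critical character of the estimate, which forces one to keep the constant under control rather than merely prove boundedness. The clean uniform bound survives only because the source-side interpolation norm is \emph{exactly} $\|\A^{\kappa/2}w\|_{L^2}$; this isometric identification rests on self-adjointness through the spectral calculus, so I would verify that $\A$ is injective — if $\A w=0$ then $\|\nabla w\|_{L^2}^2=\langle\A w,w\rangle=0$ forces $w$ constant, hence $0$ on the unbounded $\Omega$ — so that $\|\A^{\kappa/2}\cdot\|_{L^2}$ is a genuine norm and the homogeneous interpolation scale is well defined despite $0$ lying in the continuous spectrum of the exterior Stokes operator. By contrast, the tempting shortcut through the subordination formula $\A^{-\kappa/2}=\Gamma(\kappa/2)^{-1}\int_0^\infty t^{\kappa/2-1}e^{-t\A}\,dt$ combined with the Stokes-semigroup smoothing $\|e^{-t\A}f\|_{L^r}\le Ct^{-\kappa/2}\|f\|_{L^2}$ fails, since it leaves the logarithmically divergent integral $\int_0^\infty t^{-1}\,dt$; this divergence is the signature of a genuine, non-absolutely-convergent interpolation endpoint, and is precisely why I route the argument through operator interpolation rather than a direct semigroup estimate.
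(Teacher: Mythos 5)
You should first note that the paper itself contains no proof of this lemma: it is quoted with a citation to Miyakawa \cite[Theorem 2.4]{Miya82}, so your attempt can only be compared with the standard argument behind that citation. That argument shares your two key ingredients --- the identity $\|\A^{\frac12}w\|_{L^2}=\|\nabla w\|_{L^2}$ for $w\in D(\A)$ (your computation via $w=Pw$ and integration by parts is exactly right) and extension by zero to $R^3$ --- but then takes a different route: rather than interpolating the embedding between endpoint couples, one compares \emph{operators}, applying the Heinz--Kato inequality to the zero-extension isometry $E$ together with the equality $\|(-\Delta)^{\frac12}Ew\|_{L^2}=\|\A^{\frac12}w\|_{L^2}$ to conclude $\|(-\Delta)^{\frac\kappa2}Ew\|_{L^2}\le\|\A^{\frac\kappa2}w\|_{L^2}$, and then invokes the whole-space fractional Sobolev (Hardy--Littlewood--Sobolev) embedding with $\frac1r=\frac12-\frac\kappa3$, which is the same exponent as $\frac1r=\frac{1-\kappa}2+\frac\kappa6$, with constant bounded by $2$ uniformly in $\kappa\in[0,1]$. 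Your complex-interpolation scheme is a correct alternative and even yields the sharper constant $C_S^\kappa\le 1$; what it buys is that only the single endpoint $\kappa=1$ inequality is needed, whereas the Heinz--Kato route buys not having to construct the homogeneous interpolation scale at all --- the operator-monotonicity step absorbs all the functional-analytic delicacy.

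Two points in your write-up deserve tightening, though neither is a fatal gap. First, the isometric identification $[L^2_\sigma, H^{\frac12}]_\kappa=H^{\frac\kappa2}$ is the real content of your argument and is asserted rather than proved; it does follow from diagonalizing $\A$ as multiplication by $m\ge0$ on some $L^2(\mu)$ and the Stein--Weiss identity $[L^2(w_0),L^2(w_1)]_\theta=L^2(w_0^{1-\theta}w_1^\theta)$ with equal norms, but since $0$ lies in the continuous spectrum of the exterior Stokes operator the couple must be realized inside the $\mu$-measurable functions (not inside $L^2$), and your injectivity check ($m>0$ $\mu$-a.e.) is precisely what makes that realization legitimate --- this should be said explicitly, with a reference. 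Second, the endpoint Sobolev inequality is applied to a vector field: to keep the constant $C_S$ you should apply the scalar inequality to $|w|$, using $|\nabla|w||\le|\nabla w|$ a.e., or accept a harmless dimensional factor (irrelevant here since any constant below $2$ suffices). Your closing observation that the naive subordination route fails is accurate and well diagnosed: the $L^2\to L^r$ smoothing rate is exactly $t^{-\frac\kappa2}$, so $\int_0^\infty t^{\frac\kappa2-1}t^{-\frac\kappa2}\,dt$ diverges logarithmically at both ends, which is the usual signature of a critical embedding that interpolation (of spaces, as you do, or of operators, as in Heinz--Kato) is designed to circumvent.
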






\begin{Lemma}\label{L12}If $0< \kappa \le 1$ and $w \in L^2_\sigma(\Omega)^3$,  then we have
\begin{eqnarray}\|\A^\kappa   e^{-t\A^\kappa } w\|_{L^2} \le t^{-1} \|w\|_{L^2};\,\,\,\,\|  e^{-t\A^\kappa } w\|_{L^2} \le  \|w\|_{L^2},\label{X1}\end{eqnarray}
\begin{eqnarray}\label{XXX1} \|\nabla  e^{-t\A^\kappa } w\|_{L^2}\le C t^{-\frac1{2\kappa}}\|w\|_{L^2}.
\end{eqnarray}
\begin{eqnarray} \lim_{t\to \infty}\|e^{-t\A^\kappa } w\|_{L^2}=0,\label{X2}\end{eqnarray}

\end{Lemma}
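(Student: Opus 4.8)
The plan is to derive all four estimates from the spectral representation $e^{-t\A^\kappa} = \int_0^\infty e^{-t\lambda^\kappa}\,dE_\lambda$, which reduces each bound to the maximization of an elementary scalar function of $\lambda$, supplemented by two structural facts about the exterior Stokes operator: the identity $\|\nabla v\|_{L^2} = \|\A^{\frac12}v\|_{L^2}$ and the injectivity of $\A$. The recurring tool is the Plancherel-type identity $\|g(\A)w\|_{L^2}^2 = \int_0^\infty |g(\lambda)|^2\,d\|E_\lambda w\|_{L^2}^2$, valid for any bounded Borel function $g$, where the scalar measure $d\|E_\lambda w\|_{L^2}^2$ has total mass $\|w\|_{L^2}^2$.

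First I would establish (\ref{X1}). Applying the identity above with $g(\lambda)=\lambda^\kappa e^{-t\lambda^\kappa}$ and substituting $s=\lambda^\kappa$, the bound for $\A^\kappa e^{-t\A^\kappa}$ follows at once from the elementary estimate $\sup_{s\ge0} s e^{-ts} = (et)^{-1}\le t^{-1}$; the contraction bound $\|e^{-t\A^\kappa}w\|_{L^2}\le\|w\|_{L^2}$ is immediate from $e^{-t\lambda^\kappa}\le 1$. For the gradient estimate (\ref{XXX1}) the extra ingredient is the identity $\|\nabla v\|_{L^2} = \|\A^{\frac12}v\|_{L^2}$ for $v\in D(\A^{\frac12})$. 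For $v\in D(\A)$ this is seen from $\|\A^{\frac12}v\|_{L^2}^2 = \langle \A v, v\rangle = \langle -P\Delta v, v\rangle = \langle -\Delta v, v\rangle = \|\nabla v\|_{L^2}^2$, using the self-adjointness of the Helmholtz projection $P$ together with $Pv=v$ in the third step, while the boundary condition $v|_{\partial\Omega}=0$ removes the boundary term in the integration by parts; density then extends the identity to $D(\A^{\frac12})$. Since $\lambda\mapsto\lambda^{\frac12}e^{-t\lambda^\kappa}$ is bounded, we have $e^{-t\A^\kappa}w\in D(\A^{\frac12})$ for $t>0$, so applying the identity and then spectral calculus with $g(\lambda)=\lambda^{\frac12}e^{-t\lambda^\kappa}$ reduces the matter to $\sup_{s\ge0} s^{1/\kappa}e^{-2ts} = C_\kappa t^{-1/\kappa}$ (again $s=\lambda^\kappa$), which yields $\|\nabla e^{-t\A^\kappa}w\|_{L^2}^2 \le C t^{-1/\kappa}\|w\|_{L^2}^2$ and hence (\ref{XXX1}).

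Finally, the long-time decay (\ref{X2}) is where I expect the only genuine difficulty. Writing $\|e^{-t\A^\kappa}w\|_{L^2}^2 = \int_0^\infty e^{-2t\lambda^\kappa}\,d\|E_\lambda w\|_{L^2}^2$, the integrand is dominated by $1$ and tends to $0$ as $t\to\infty$ for every $\lambda>0$, so dominated convergence gives $\lim_{t\to\infty}\|e^{-t\A^\kappa}w\|_{L^2}^2 = \|E_{\{0\}}w\|_{L^2}^2$, the mass of the spectral measure at the bottom of the spectrum. The crux is therefore to show this atom vanishes, i.e. that $\A$ has no zero eigenvalue. This I would obtain from the identity just proved: $\A w = 0$ forces $\|\nabla w\|_{L^2} = \|\A^{\frac12}w\|_{L^2}=0$, so $w$ is constant, and a constant field lying in $L^2$ of the unbounded domain $\Omega$ (of infinite measure) and vanishing on $\partial\Omega$ must be zero. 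Hence $E_{\{0\}}=0$ and the limit is $0$.

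The point to handle with care is that $E_{\{0\}}$ is precisely the orthogonal projection onto $\ker\A$, so the absence of a zero eigenvalue is exactly what removes the only obstruction to decay; once this is secured, the remainder is routine spectral bookkeeping. The elementary function-maximization bounds and the contraction estimate require no subtlety, so I would spend the bulk of the effort on the identity $\|\nabla v\|_{L^2}=\|\A^{\frac12}v\|_{L^2}$ and its consequence $\ker\A=\{0\}$, which together power both (\ref{XXX1}) and (\ref{X2}).
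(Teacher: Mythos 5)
Your proof is correct, and for (\ref{X1}) and (\ref{XXX1}) it coincides with the paper's argument: the paper computes $\|\A^{n\kappa}e^{-t\A^\kappa}w\|_{L^2}^2=\int_0^\infty \lambda^{2n\kappa}e^{-2t\lambda^\kappa}\,d\|E_\lambda w\|_{L^2}^2\le t^{-2n}\|w\|_{L^2}^2$ for $n=0,1$ and then writes $\|\nabla e^{-t\A^\kappa}w\|_{L^2}=\|\A^{1/2}e^{-t\A^\kappa}w\|_{L^2}\le Ct^{-1/(2\kappa)}\|w\|_{L^2}$, exactly your scalar-maximization bounds. The only genuine divergence is in (\ref{X2}): the paper disposes of it in one line, claiming it ``readily follows'' from (\ref{X1}), (\ref{XXX1}) and density of $C^\infty_{0,\sigma}(\Omega)^3$ in $L^2_\sigma(\Omega)^3$ --- implicitly the standard argument that $\|e^{-t\A^\kappa}\A^\kappa v\|_{L^2}\le t^{-1}\|v\|_{L^2}\to0$ on the range of $\A^\kappa$, which is dense precisely because $\A$ has trivial kernel, combined with the uniform contraction bound. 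You instead pass to the limit by dominated convergence in the spectral integral and identify the residual mass as $\|E_{\{0\}}w\|_{L^2}^2$, then kill it by showing $\A w=0$ forces $\nabla w=0$, hence $w$ constant, hence $w=0$ in the infinite-measure exterior domain. Both routes hinge on the same fact ($\ker\A=\{0\}$, equivalently no atom of the spectral measure at the bottom of the spectrum), but yours makes explicit the one point the paper leaves unjustified, so it is the more self-contained of the two; your verification of $\|\nabla v\|_{L^2}=\|\A^{1/2}v\|_{L^2}$ via $\langle\A v,v\rangle=\|\nabla v\|_{L^2}^2$ likewise supplies a step the paper takes for granted from Miyakawa's characterization of $D(\A^{1/2})$.
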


This lemma is a simple consequence of the  spectral representation ( see, for example, Yosida \cite[Page 313, Theorem 1]{Yosida})
and the property of $\L$ being  positive and a self-adjoint operator.

 Equation  (\ref{X1}) is obtained  as
\begin{eqnarray} \|\L  ^ne^{-t\L } w\|_{L^2}^2= \int^\infty_0 \lambda^{2n\kappa} e^{-2t\lambda^\kappa} d\|\E_\lambda w\|_{L^2}^2\le t^{-2n} \|w\|_{L^2}^2,\,\,\, n=0, 1, \label{semi}
\end{eqnarray}
which also ensures the analytic  property of the semigroup $e^{-t\A^\kappa}$.

Similar to (\ref{semi}), we have
\begin{eqnarray} \| \nabla e^{-t\L }w\|_{L^2}=\| \A^{\frac12}e^{-t\L }w\|_{L^2}
&\le& C t^{-\frac1{2\kappa}} \|w\|_{L^2}.\label{kappa}
\end{eqnarray}
and thus obtain (\ref{XXX1}).

Decay property (\ref{X2})  readily follows from  (\ref{X1}), (\ref{XXX1}) and the density  of $C^\infty_{\sigma}(\Omega)^3$ in $L^2_\sigma(\Omega)^3$.



\section{Proof of Theorem \ref{Th1}}

\subsection{Existence of weak solutions}

With the use of the mollification operator
\begin{eqnarray}  J_n= n(n+\A)^{-1},
\end{eqnarray}
for  positive integers $n$,  we  construct approximation solutions to (\ref{eq11})-(\ref{eq1}) by solving the equations
\begin{eqnarray} \partial _t u_n + \A^\alpha u_n =  P(J_nB_n\cdot \nabla B_n-J_nu_n\cdot \nabla u_n),   \,\,\,\, u_n(0) =J_n u_0,\label{xx2}
\\
 \partial _t B_n + \A^\beta B_n =  P(J_nB_n\cdot \nabla u_n-J_n u_n\cdot \nabla B_n),   \,\,\,\, B_n(0) =J_n B_0.\label{xx22}
\end{eqnarray}

By the spectral representation and for $w\in L^2_\sigma(\Omega)^3$, we have
\begin{eqnarray} \|J_n  w \|_{L^2}\le \| w \|_{L^2}, \label{JkIk}\end{eqnarray}
\begin{eqnarray} \lim_{n\to \infty } \|J_n  w-w \|_{L^2}=0,\end{eqnarray}
and, for a constant $C_n$ dependent of $n$ and $\frac34 <\kappa \le 1$,
\begin{eqnarray} \| J_n w \|_{L^\infty}\le C_n \| w \|_{L^2}, \,\, \| \A^\kappa J_n w\|_{L^2}\le C_n \|w\|_{L^2},\,\, \label{JkIk1}
\end{eqnarray}
after the use of Sobolev imbedding.

Now we show the existence of weak solutions approached by  approximate solutions  in the  space
\begin{eqnarray*} L^\infty(0,T; W_{0,\sigma}^{1,2}(\Omega)^3\times W^{1,2}_{0,\sigma}(\Omega)^3),
\end{eqnarray*}
 where $ W_{0,\sigma}^{1,2}(\Omega)^3$ is the closure of $C^\infty_{0,\sigma}(\Omega)^3$ in the Sobolev space $W^{1,2}(\Omega)^3$.

\begin{Theorem}\label{TT1}
For $\frac34<\alpha, \,\,\beta  \le 1$ and $(u_0,B_0)\in L^2_\sigma(\Omega)^3\times L^2_\sigma(\Omega)^3$, then (\ref{eq11})-(\ref{eq1}) admit a  weak solution.
\end{Theorem}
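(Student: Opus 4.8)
The plan is to obtain the weak solution as a limit of the mollified approximations (\ref{xx2})-(\ref{xx22}) as $n\to\infty$. First I would solve (\ref{xx2})-(\ref{xx22}) locally in time for each fixed $n$ via the mild formulation
\begin{align*}
u_n(t) = e^{-t\A^\alpha}J_n u_0 + \int_0^t e^{-(t-s)\A^\alpha}P\nabla\cdot\big(J_nB_n\otimes B_n - J_nu_n\otimes u_n\big)\,ds,
\end{align*}
and similarly for $B_n$, where I have written each nonlinearity in divergence form using $\nabla\cdot(J_nw)=0$. Since $J_n$ maps $L^2_\sigma(\Omega)^3$ into $L^\infty$ and into $D(\A^\kappa)$ by (\ref{JkIk1}), the mollified bilinear map is locally Lipschitz on $L^2_\sigma\times L^2_\sigma$; combining this with the smoothing bound $\|\A^{1/2}e^{-\tau\A^\alpha}w\|_{L^2}\le C\tau^{-1/(2\alpha)}\|w\|_{L^2}$ from (\ref{XXX1}), a contraction argument in $C([0,T_n];L^2_\sigma\times L^2_\sigma)$ gives a unique local solution. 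The regularising effect of $J_n$ together with the analytic semigroups places $(u_n,B_n)$ in $L^\infty(0,T;W^{1,2}_{0,\sigma}\times W^{1,2}_{0,\sigma})$, as asserted.

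Second, I would derive the $n$-uniform energy identity by testing (\ref{xx2}) with $u_n$ and (\ref{xx22}) with $B_n$ and adding. The two ``self'' nonlinear contributions $\langle J_nu_n\cdot\nabla u_n,u_n\rangle$ and $\langle J_nu_n\cdot\nabla B_n,B_n\rangle$ vanish because $\nabla\cdot(J_nu_n)=0$, while the two remaining coupling terms $\langle (J_nB_n)\cdot\nabla B_n,u_n\rangle$ and $\langle (J_nB_n)\cdot\nabla u_n,B_n\rangle$ cancel after one integration by parts, again using $\nabla\cdot(J_nB_n)=0$. This yields
\begin{align*}
\frac12\frac{d}{dt}\big(\|u_n\|_{L^2}^2+\|B_n\|_{L^2}^2\big) + \|\A^{\alpha/2}u_n\|_{L^2}^2+\|\A^{\beta/2}B_n\|_{L^2}^2 = 0,
\end{align*}
so that $(u_n,B_n)$ is bounded, uniformly in $n$, in $L^\infty(0,T;L^2_\sigma\times L^2_\sigma)\cap L^2(0,T;D(\A^{\alpha/2})\times D(\A^{\beta/2}))$, with bound controlled by $\|u_0\|_{L^2}^2+\|B_0\|_{L^2}^2$. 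The $L^2$ bound removes the dependence on $C_n$ and lets me continue each local solution to all $t>0$.

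Third, from these uniform bounds I would extract a subsequence converging weakly-$*$ in $L^\infty_tL^2$ and weakly in $L^2_tD(\A^{\cdot/2})$ to a limit $(u,B)$. To pass to the limit in the nonlinear terms I need strong convergence. Using Lemma \ref{Miya}, the hypothesis $\alpha,\beta>\frac34$ gives $u_n\in L^2(0,T;L^{r_\alpha})$ with $\frac1{r_\alpha}=\frac12-\frac\alpha3$, hence $r_\alpha>4$; interpolating with the $L^\infty_tL^2$ bound controls the tensors $J_nB_n\otimes B_n$ and $J_nu_n\otimes u_n$ uniformly in some $L^p(0,T;L^q)$ with $p,q>1$ (here I also use that $\A^{\beta/2}J_nw=J_n\A^{\beta/2}w$ keeps $J_nB_n$ uniformly bounded in $L^2_tD(\A^{\beta/2})$). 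Pairing the equations with compactly supported test fields then bounds $\partial_tu_n$ in $L^p(0,T;(D(\A^s))')$ for a suitable $s$, which is the equicontinuity-in-time input for an Aubin--Lions argument.

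The main obstacle is that $\Omega$ is an exterior, hence unbounded, domain, so the embedding $D(\A^{\alpha/2})\hookrightarrow L^2$ is \emph{not} compact and Aubin--Lions cannot be invoked globally. I would circumvent this by exhausting $\Omega$ with bounded smooth subdomains $\Omega_k\uparrow\Omega$, applying Aubin--Lions on each $\Omega_k$ where the relevant embedding is compact, and extracting a diagonal subsequence giving strong $L^2(0,T;L^2(\Omega_k))$ convergence for every $k$. Since the test fields $(\phi,\psi)$ in the Definition are compactly supported, this local strong convergence suffices to pass to the limit in the nonlinear integrals, while the linear terms pass by weak convergence and the mollifier disappears through $J_nw\to w$. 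Checking that the limit $(u,B)$ belongs to the required class and satisfies (\ref{w1})-(\ref{w2}) for all admissible test fields then completes the proof.
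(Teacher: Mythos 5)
Your proposal follows essentially the same route as the paper: mollification by $J_n=n(n+\A)^{-1}$, a contraction-mapping local existence for (\ref{xx2})--(\ref{xx22}), the cancellation-based energy identity giving $n$-uniform bounds in $L^\infty_tL^2\cap L^2_tD(\A^{\cdot/2})$ and global extension, time-derivative bounds in a negative Sobolev space via (\ref{n39}), and compactness localized to compact subsets of the exterior domain (the paper cites Temam's theorem for exactly this) followed by passage to the limit against compactly supported test fields. The only point where you diverge is contracting in $C([0,T];L^2_\sigma)$ with the nonlinearity in divergence form and then asserting that regularization upgrades the solution to $W^{1,2}_{0,\sigma}$; the paper instead contracts directly in the norm $\|(u,B)\|_{L^2}+\|\nabla(u,B)\|_{L^2}$ using $\|J_nw\cdot\nabla w\|_{L^2}\le \|J_nw\|_{L^\infty}\|\nabla w\|_{L^2}$, which avoids the non-integrable singularity $(t-s)^{-1/\alpha}$ you would face when placing a full gradient on $e^{-(t-s)\A^\alpha}P\nabla\cdot(\,\cdot\,)$ for $\alpha\le 1$, so you should either adopt that norm or supply the missing bootstrap.
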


Let us begin with the  unique existence of the approximate solutions. 
\begin{Proposition}\label{pro2}
 For $\frac34<\alpha, \,\,\beta  \le 1$,  $(u_0,B_0)\in L^2_\sigma(\Omega)^3\times L^2_\sigma(\Omega)^3$,  $T>0$ and an integer $n\ge 1$, then there exits a unique  vector field $(u_n(t),B_n(t))$ on $[0,T]$ so that
 \be (u_n,B_n) \in L^\infty(0,T; W_{0,\sigma}^{1,2}(\Omega)^3\times W_{0,\sigma}^{1,2}(\Omega)^3)
 \ee
  and solves  the integral equations
\begin{eqnarray}\label{int}
u_n(t) \!= \!e^{-t \A^\alpha} \! J_n u_0\!+\!\!\int^t_0 \!\! e^{-(t-s) \A^\alpha}\!  P(J_nB_n(s)\!\cdot\! \nabla B_n(s)\!-\!J_nu_n(s)\!\cdot\! \nabla u_n(s))ds,
\\
\label{int2}
B_n(t) \!=\! e^{-t \A^\beta} \! J_n B_0\!+\!\!\int^t_0 \!\!\! e^{-(t-s) \A^\beta}  \! P(J_nB_n(s)\!\cdot \!\nabla u_n(s)\!-\!J_nu_n(s)\!\cdot\! \nabla B_n(s))ds.
\end{eqnarray}
\end{Proposition}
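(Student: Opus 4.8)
The plan is to prove Proposition \ref{pro2} by a standard Banach fixed-point argument applied to the integral system (\ref{int})--(\ref{int2}), exploiting the smoothing properties of the mollifier $J_n$ recorded in (\ref{JkIk})--(\ref{JkIk1}). First I would set up the complete metric space
\be
X_T = C([0,T]; W_{0,\sigma}^{1,2}(\Omega)^3 \times W_{0,\sigma}^{1,2}(\Omega)^3),
\ee
equipped with the sup-in-time norm, and define the solution map $\Phi=(\Phi_1,\Phi_2)$ whose components are the right-hand sides of (\ref{int}) and (\ref{int2}). The goal is to show that, for $T>0$ chosen sufficiently small (depending on $n$ and the data), $\Phi$ maps a closed ball of $X_T$ into itself and is a contraction there.

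The key observation making this work is that the mollifier $J_n$ converts the rough nonlinear terms into bounded ones: by (\ref{JkIk1}) we have $\|J_n w\|_{L^\infty}\le C_n\|w\|_{L^2}$ and $\|\A^\kappa J_n w\|_{L^2}\le C_n\|w\|_{L^2}$ for $\frac34<\kappa\le 1$. Thus for a typical nonlinear term such as $P(J_n u_n\cdot\nabla u_n)$ I would estimate
\be
\|P(J_n u_n\cdot\nabla u_n)\|_{L^2}\le \|J_n u_n\|_{L^\infty}\|\nabla u_n\|_{L^2}\le C_n\|u_n\|_{L^2}\|u_n\|_{W^{1,2}},
\ee
so that the quadratic nonlinearity is controlled by the $W^{1,2}$ norm with a constant depending only on $n$. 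Applying the semigroup estimates of Lemma \ref{L12} --- specifically (\ref{X1}) for the $L^2$ bound and (\ref{XXX1}) for the gradient bound $\|\nabla e^{-t\A^\alpha}w\|_{L^2}\le Ct^{-\frac1{2\alpha}}\|w\|_{L^2}$ --- to the Duhamel integrals yields, for the gradient part,
\be
\left\|\nabla\!\int^t_0 e^{-(t-s)\A^\alpha}P(\cdots)\,ds\right\|_{L^2}
\le C\int^t_0 (t-s)^{-\frac1{2\alpha}}\,ds\;\sup_{s\le t}\|P(\cdots)\|_{L^2},
\ee
and since $\frac34<\alpha\le 1$ forces $\frac1{2\alpha}<\frac23<1$, the time integral $\int^t_0(t-s)^{-\frac1{2\alpha}}ds = \frac{2\alpha}{2\alpha-1}\,t^{1-\frac1{2\alpha}}$ is convergent and vanishes as $t\to0$. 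The $L^2$ part is even more elementary via the first inequality in (\ref{X1}). Together these give estimates of the form $\|\Phi(u,B)\|_{X_T}\le \|(J_nu_0,J_nB_0)\|_{W^{1,2}}+C_n\,T^{1-\frac1{2\alpha}}\,R^2$ on a ball of radius $R$, and an analogous Lipschitz estimate $\|\Phi(u,B)-\Phi(\tilde u,\tilde B)\|_{X_T}\le C_n T^{1-\frac1{2\alpha}}R\,\|(u,B)-(\tilde u,\tilde B)\|_{X_T}$ for the difference, using bilinearity of the nonlinearities.

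The main obstacle, and the only genuinely delicate point, is verifying that the initial data $J_n u_0$ and $J_n B_0$ actually lie in $W^{1,2}_{0,\sigma}(\Omega)^3$ so that the fixed-point ball is nonempty and the linear term $e^{-t\A^\alpha}J_n u_0$ belongs to $X_T$; this is exactly where the regularizing nature of $J_n=n(n+\A)^{-1}$ is essential, since $u_0$ is merely $L^2$. One checks $\|\A^{1/2}J_nu_0\|_{L^2}\le \sqrt n\,\|u_0\|_{L^2}$ from the spectral representation, giving the required $W^{1,2}$ regularity with an $n$-dependent bound, and continuity in $t$ follows from the strong continuity of the analytic semigroup. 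Once the ball is nonempty, the contraction mapping theorem yields a unique local solution; the final step is to iterate by a standard continuation argument, observing that the a priori energy bound obtained by pairing (\ref{xx2})--(\ref{xx22}) with $u_n,B_n$ (the nonlinear terms cancel in the $L^2$ inner product by the divergence-free and boundary conditions) keeps the $L^2$ norm bounded on $[0,T]$, which together with the $n$-dependent smoothing prevents blow-up and extends the unique solution to the whole interval $[0,T]$.
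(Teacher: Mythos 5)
Your proposal follows essentially the same route as the paper: a Banach fixed-point argument for the Duhamel system in a ball of $W^{1,2}_{0,\sigma}$-valued functions, with the nonlinearity tamed by the bound $\|J_nw\|_{L^\infty}\le C_n\|w\|_{L^2}$, the gradient of the semigroup controlled by $(t-s)^{-\frac{1}{2\alpha}}$ (integrable since $\alpha>\frac12$), the data regularized by $\|\A^{1/2}J_nu_0\|_{L^2}\le C_n\|u_0\|_{L^2}$, and global continuation driven by the cancellation of the nonlinear terms in the energy identity. All of these steps are correct and match the paper's proof.

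The one place you are too brief is the continuation step. The local existence time produced by the contraction depends on the $W^{1,2}$ bound $M$, while the energy identity only controls the $L^2$ norm, so "the $n$-dependent smoothing prevents blow-up" still requires an explicit a priori bound on $\|\nabla(u_n,B_n)(t)\|_{L^2}$ up to the maximal time. The paper obtains this by inserting the uniform $L^2$ bound (\ref{new112}) into the Duhamel formula for the gradient, which yields a linear integral inequality with the weakly singular kernel $(t-s)^{-\frac{1}{2\alpha}}+(t-s)^{-\frac{1}{2\beta}}$ acting on $\|\nabla(u_n,B_n)(s)\|_{L^2}$, and then applies a (singular) Gronwall inequality to get the bound (\ref{nana}). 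You have all the ingredients for this, but the Gronwall step is the actual mechanism and should be stated.
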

\begin{proof}
To apply the contraction mapping principle with respect to a fixed integer $n\ge 1$ and a bound $M>0$, we define the complete metric space
\begin{align*} X_T = &\left\{ (u,B) \in L^\infty(0,T; W_{0,\sigma}^{1,2}(\Omega)^3\times W_{0,\sigma}^{1,2}(\Omega)^3); \,\,\right.
\\ &\left. (u(0), B(0)) =(J_nu_0, J_n B_0),\,\, \|(u,B)\|_{X_T} \le M\right\}
\end{align*}
with the norm
\begin{eqnarray*}\|(u,B)\|_{X_T} = \esssup_{0\le t\le T} (\|(u(t),B(t))\|_{L^2} + \|\nabla  (u(t),B(t))\|_{L^2}),\end{eqnarray*}
and the operators
\begin{eqnarray*}\Phi_n(u,B)(t) = e^{-t \A^\alpha} J_n u_0+\int^t_0 e^{-(t-s) \A^\alpha}  P(J_nB\cdot \nabla B-J_nu\cdot \nabla u)ds,
\\
\Psi_n(u,B)(t)= e^{-t \A^\beta} J_n B_0+\int^t_0 e^{-(t-s) \A^\beta}  P(J_nB\cdot \nabla u-J_nu\cdot \nabla B)ds.
\end{eqnarray*}


Employing Lemma \ref{L12}  and  (\ref{JkIk})-(\ref{JkIk1}), we obtain that
\begin{align} \nonumber\| e^{-t \A^\alpha} J_n u_0\|_{L^2} &+\|\nabla e^{-t \A^\alpha} J_n u_0\|_{L^2} +\| e^{-t \A^\beta} J_n B_0\|_{L^2} +\|\nabla e^{-t \A^\beta} J_n B_0\|_{L^2}\\
&\le \|u_0\|_{L^2}+\|B_0\|_{L^2}+ \|  e^{-t \A^\alpha} \A^\frac12J_n u_0\|_{L^2}+ \|  e^{-t \A^\beta} \A^\frac12J_n B_0\|_{L^2}\nonumber
\\
 &\le  C_n(\|u_0\|_{L^2}+\|B_0\|_{L^2})
\le \frac M4,\label{nnn0}
\end{align}
provided that $M$ is sufficiently large. 

Therefore,  for  $(u,B)\in  {X_T}$, we use (\ref{JkIk}),  (\ref{JkIk1}) and Lemma \ref{L12}    to produce
\begin{eqnarray*}
\lefteqn{\|\Phi_n (u,B)(t)\|_{L^2} + \|\nabla \Phi_n (u,B)(t)\|_{L^2}}\\
&\le& \| e^{-t \A^\alpha } J_n u_0\|_{L^2}+\int^t_0 \|e^{-(t-s)\A^\alpha }  P(J_nB\cdot \nabla B-J_nu\cdot \nabla u)\|_{L^2} ds
\\
&&+\| \nabla e^{-t\A^\alpha} J_n u_0\|_{L^2}+\int^t_0 \|\nabla e^{-(t-s) \A^\alpha} P(J_nB\cdot \nabla B-J_nu\cdot \nabla u)\|_{L^2} ds
\\
&\le & \frac M4+C\int^t_0 (1 +(t-s)^{-\frac1{2\alpha }}) (\| J_n u \cdot \nabla u\|_{L^2} + \| J_n B \cdot \nabla B\|_{L^2}) ds
\\
&\le &\frac M4+C\int^t_0 (1 +(t-s)^{-\frac1{2\alpha }}) (\| J_n u \|_{L^\infty} \| \nabla u\|_{L^2}+\| J_n B \|_{L^\infty} \| \nabla B\|_{L^2} ) ds
\\
&\le &\frac M4+ C_n(T +T^{1-\frac1{2\alpha }}) M^2,
\end{eqnarray*}
or
\begin{eqnarray}
\|\Phi_n (u,B)(t)\|_{L^2} + \|\nabla \Phi_n (u,B)(t)\|_{L^2}\le \frac M2,\label{my10}
\bee
for $0<t\le T$ and  $T$ sufficiently small.

Similarly, for  $(u,B),\,\, (u',B')\in {X_T}$, we have the contraction property
\begin{align}
\|\Phi_n (u,B)(t)-&\Phi_n (u',B')(t)\|_{L^2}+\|\nabla\Phi_n (u,B)(t)-\nabla \Phi_n (u',B')(t)\|_{L^2}\nonumber
\\
&\le  C_n(T +T^{1-\frac1{2\alpha }}) M\|(u,B)-(u',B')\|_{X_T}\nonumber
\\
&\le \frac14 \|(u,B)-(u',B')\|_{X_T},\label{my20}
\end{align}
when $T$ is sufficiently small.

Arguing in the same manner, we have (\ref{my10})-(\ref{my20}) with $\Phi_n$ replaced by $\Psi_n$.
Thus $(\Phi_n,\Psi_n)$ is a contraction operator mapping $ {X_T}$ into itself, and so $(\Phi_n, \Psi_n)$ admits a unique fixed point $(u_n,B_n)\in  {X_T}$ satisfying  integral equations (\ref{int})-(\ref{int2}). We thus obtain local existence result.
This integral equation solution also solves the differential equations  (\ref{xx2}) and (\ref{xx22}). This is   confirmed by   the $L^2$ theory of linear parabolic equation given by Simon \cite{Simon}, which also   ensures that
\begin{eqnarray*}
(u_n,B_n) \in L^2\left(0, T; D(\A^\alpha )\times D(\A^\beta)\right),
\\
 (\partial _t u_n, \p_t B_n) \in
L^2\left(0, T; L^2_\sigma(\Omega)^3 \times L^2_\sigma(\Omega)^3\right).
\end{eqnarray*}

 It is readily seem that the local solution can be extended globally, if
 \bbe \|(u_n,B_n)\|_{X_T}<\infty,\label{XTXT} \bee
  whenever $(u_n,B_n)$ solves (\ref{xx2}) and (\ref{xx22}) on the open time interval $[0,T)$ for $T>0$.
 Indeed, taking the inner product of  (\ref{xx2})-(\ref{xx22}) with $(u_n,B_n)$ and integrating parts by the divergence free condition of $J_nu$ and $J_nB$, we have
\begin{eqnarray} \langle \partial _t u_n, u_n\rangle  + \langle \partial _t  B_n,B_n\rangle  +\langle \A^\alpha  u_n, u_n\rangle  + \langle \A^\beta  B_n, B_n\rangle =0.  \label{xx2x}
\end{eqnarray}
Hence,  we have the energy inequality,  after the integration with respect to $t$ and the use of (\ref{JkIk}),
\begin{align}
\|u_n(t)\|_{L^2}^2+&\|B_n(t)\|_{L^2}^2 +2\int^t_0(\|\A^\frac\alpha 2 u _n\|_{L^2}^2+\|\A^\frac\beta 2 B_n\|_{L^2}^2)ds\nonumber
\\
&\le  \|J_nu_0\|_{L^2}^2+\|J_nB_0\|_{L^2}^2\le \|u_0\|_{L^2}^2+ \|B_0\|_{L^2}^2.\label{new112}
\end{align}
This shows the uniform boundedness of $\|(u_n,B_n)(t)\|_{L^2}$.

To obtain  the boundedness of $\|\nabla (u_n,B_n)(t)\|_{L^2}$,  we apply    (\ref{XXX1}),   (\ref{JkIk1})  and (\ref{new112})  into (\ref{int})-(\ref{int2}) to produce
\begin{align*}
\|\nabla (u_n,B_n)(t)\|_{L^2}
&\le  C_n\|(u_0,B_0)\|_{L^2}
\\
&\!+\!C_n\!\!\int^t_0\!\! \!((t\!-\!s)^{-\frac1{2\alpha }}\!+\!(t\!-\!s)^{-\frac1{2\beta }}\!) \| (u_n,B_n) \|_{L^2} \|\nabla (u_n,B_n)\|_{L^2} ds
\\
&\le  C_n+C_n\int^t_0 ((t-s)^{-\frac1{2\alpha}}+(t-s)^{-\frac1{2\beta }})  \|\nabla (u_n,B_n)\|_{L^2} ds.
\end{align*}
Hence, using the Gronwall inequality, we have the desired bound
\begin{eqnarray} \|\nabla (u_n,B_n)(t)\|_{L^2} &\le & C_n \exp(C_n T^{1-\frac1{2\alpha }}+C_n T^{1-\frac1{2\beta }}),\,\, 0<t<T.\label{nana}\end{eqnarray}
Therefore, the combination of (\ref{new112}) and (\ref{nana})  gives (\ref{XTXT}). Thus the local solution $(u_n,B_n)$ can be extended to the time interval $[0,T]$ for any $T>0$.
The proof of Proposition \ref{pro2} is complete.
\end{proof}

\noindent{\bf Proof of Theorem \ref{TT1}}.
To obtain the  weak solution existence, we need to study the compactness of the sequence $(u_n,B_n)$. For
$ \phi\in W^{1,2}_{0,\sigma}(\Omega)^3,$
 taking the inner product of (\ref{int}) with $\phi$ and integrating by parts, we have
\begin{align}
\langle \partial _tu_n, \phi\rangle
 &=-\langle \A^\alpha u_n, \phi\rangle  +\langle J_nB_n\cdot \nabla B_n-J_nu_n\cdot \nabla u_n,\phi\rangle\nonumber
\\
&= -\langle  \A^{\alpha -\frac12} u_n, \A^\frac12 \phi\rangle
- \langle J_n u _n\otimes  u _n,\nabla \phi\rangle+\langle J_n B _n\otimes  B _n,\nabla \phi\rangle\nonumber
\\
&\le (\| \A^{\alpha -\frac12} u _n\|_{L^2}+
\|J_n u_n\|_4\|u_n\|_4+\|J_n B_n\|_4\|B_n\|_4
)\|\nabla \phi\|_{L^2}.\label{my1}
\end{align}
It follows from H\"older inequality and   Lemma \ref{Miya}  that
\bbe \|w\|_{L^4} \le \|w\|_{L^2}^{1-\frac3{4\kappa}}\| w\|_{L^{1/(\frac12-\frac\kappa 3)}}^\frac3{4\kappa}
\le C\|w\|_{L^2}^{1-\frac3{4\kappa}}\| \A^\frac\kappa2 w\|_{L^2}^\frac3{4\kappa}, \mbox{ for } \kappa >\frac34.\label{n39}
\bee
Therefore, by (\ref{JkIk}), (\ref{new112}) and an interpolation inequality, we see that  (\ref{my1}) becomes
\begin{eqnarray}
\|\partial _t u_n\|_{W^{-1,2}}\nonumber
&\le& C\|  u_n\|_{L^2}^{\frac1\alpha -1}\|\A^{\frac\alpha 2} u_n\|_{L^2}^{2-\frac1\alpha }
\\
&&+\nonumber
C\| u_n\|_{L^2}^{2-\frac3{2\alpha }}\|\A^\frac\alpha 2 u_n\|_{L^2}^{\frac3{2\alpha }}+
C\| B_n\|_{L^2}^{2-\frac3{2\beta }}\|\A^\frac\beta 2 B_n\|_{L^2}^{\frac3{2\beta }}
\\
&\le& C(\|\A^{\frac\alpha 2} u_n\|_{L^2}^{2-\frac1\alpha }+\|\A^{\frac\alpha 2} u_n\|_{L^2}^{\frac3{2\alpha }}+ \|\A^{\frac\beta 2} B_n\|_{L^2}^{\frac3{2\beta }}).\label{my2}
\end{eqnarray}
Arguing in the same manner  as in the derivation of  (\ref{my2}), we have
\begin{eqnarray} \|\partial _t B_n\|_{W^{-1,2}}\label{my3}
&\le&  C(\|\A^{\frac\beta 2} B_n\|_{L^2}^{2-\frac1\beta }+\|\A^{\frac\alpha 2} u_n\|_{L^2}^{\frac3{2\alpha }}+\|\A^{\frac\beta 2} B_n\|_{L^2}^{\frac3{2\beta }}).\end{eqnarray}
Thus, the combination of  (\ref{new112}), (\ref{my2}) and (\ref{my3}) yields the boundedness of
\begin{eqnarray} \partial _t u_n, \p_t B_n \in L^{\kappa}(0,T; W^{-1,2}(\Omega)^3) \mbox{ for } \kappa =\min\{\frac2{2-\frac1\alpha},\,\frac2{2-\frac1\beta}, \, \frac{4\alpha}3,\,\frac{4\beta}3\}
\end{eqnarray}
uniformly with respect to  all $n\ge 1$
 for any fixed  $T>0$. It follows from (\ref{new112}) that the sequence  $(u_n,B_n)$ is uniformly bounded with respect to integers $n\ge 1$  in the space
\begin{eqnarray} L^\infty(0,T; L^2_\sigma(\Omega)^3\times L^2_\sigma(\Omega)^3)\cap L^2 (0,T; D(\A^{\frac\alpha 2})\times D(\A^\frac\beta2)).\label{space}
\end{eqnarray}
Therefore,
 $(u_n,B_n)$ admits a subsequence, denoted again by $(u_n,B_n)$, converging to an element   $(u,B)$ in the space given by (\ref{space}) in the following sense
\begin{eqnarray*}
&&(u_n,B_n)  \rightarrow (u,B) \,\,\mbox{weak-star  in }\,\, L^\infty(0,T; L^2_\sigma(\Omega)^3\times L^2_\sigma(\Omega)^3),
\\
&& (u_n,B_n) \rightarrow (u,B) \,\,\mbox{ weakly in }\,\, L^2 (0,T; D(\A^{\frac\alpha 2})\times D(\A^\frac\beta2)).
\end{eqnarray*}
 Furthermore, due to the compact result from Temam \cite[Theorem 2.1 in page 271]{T}, we see that 
\begin{eqnarray*}   (u_n,B_n) \rightarrow (u,B) \,\,\mbox{ strongly in }\,\, L^2 (0,T; L^2(K)^3 \times L^2(K)^3),
\end{eqnarray*}
 for any compact set $K \subset \Omega$.
This convergence implies  that   $(u,B)$ is a desired weak solution  solving (\ref{w1}) and (\ref{w2}).

The proof of Theorem \ref{TT1} is complete.

\subsection{Proof of assertion  (\ref{ass1})}

From the convergence of the sequence $(u_n,B_n)$ in  the previous subsection, we see that
\begin{eqnarray} \|(u,B)(t)\|_{L^2} \le \liminf_{n\to \infty}\|(u_n,B_n)(t)\|_{L^2},
\end{eqnarray}
for  $t>0$. Thus it  suffices to  show corresponding $L^2$ decay estimates of the approximate solutions  $(u_n,B_n)$ uniformly with respect to $n\ge 1$.  Therefore, for simplicity of notation, we omit the subscript $n$ for the approximate solutions. Thus  approximate  equations
(\ref{int}) and (\ref{int2}) are  rewritten, respectively,  as
\begin{eqnarray}\label{intt}
u(t) = e^{-t \A^\alpha} J_n u_0+\int^t_0 e^{-(t-s) \A^\alpha}  P(J_nB\cdot \nabla B-J_nu\cdot \nabla u)ds,
\\
\label{intt2}
B(t) = e^{-t \A^\beta} J_n B_0+\int^t_0 e^{-(t-s) \A^\beta}  P(J_nB\cdot \nabla u-J_nu\cdot \nabla B)ds.
\end{eqnarray}
According to the proof of (\ref{new112}),   energy inequality (\ref{new112}) can be rephrased as
\begin{eqnarray} \nonumber\|u(t)\|_{L^2}^2+\|B(t)\|_{L^2}^2 + \int^t_s(\|\A^{\frac\alpha 2} u (r)\|_{L^2}^2+\|\A^{\frac\beta 2} B (r)\|_{L^2}^2)dr
\\
\le \|u(s)\|_{L^2}^2+\|B(s)\|_{L^2}^2,\label{energy}\end{eqnarray}
for  $0\le s <t$.

Let us begin with the estimate of the nonlinear integrand of (\ref{intt}) in the $L^2$ norm.
Applying the divergence-free property   and  integrating by parts, we take a test function $\phi\in L^2_\sigma (\Omega)^3$ to obtain
 \begin{align} \langle e^{-t \A^\alpha }& P(J_nB\cdot \nabla B-J_nu\cdot \nabla u), \phi\rangle\nonumber
 \\
 &= \langle J_n u \otimes  u ,\nabla e^{-t \A^\alpha}\phi\rangle- \langle J_n B \otimes  B ,\nabla e^{-t \A^\alpha} \phi\rangle,\label{xy1}
 \end{align}
Applying  H\"older inequality to the previous identity and employing the estimates (\ref{XXX1}), (\ref{JkIk}) and (\ref{n39}),
we have
\begin{align}
\|e^{-t \A^\alpha }& P(J_nB\cdot \nabla B-J_nu\cdot \nabla u)\|_{L^2}\nonumber
\\
&\le Ct^{-\frac1{2\alpha}} (\|J_n u \|_{L^4}  \|u\|_{L^4}+\|J_n B \|_{L^4}  \|B\|_{L^4})\nonumber
\\
&\le Ct^{-\frac1{2\alpha }}(\| u \|_{L^2}^{2-\frac3{2\alpha }} \| \A^{\frac\alpha 2} u \|_{L^2}^{\frac3{2\alpha }}
+\| B \|_{L^2}^{2-\frac3{2\beta }} \| \A^{\frac\beta 2} B \|_{L^2}^{\frac3{2\beta }}). \label{mymy12}
\end{align}
With the  application of (\ref{mymy12}), the   $L^2$ estimate of (\ref{intt}) becomes 
\begin{align}
 &\|u(t)\|_{L^2}-  \|  e^{-t\A^\alpha}u_0\|_{L^2}\label{mymy2}
 \\
&\le C\int^t_0 (t-s)^{-\frac1{2\alpha }}(\| u (s)\|_{L^2}^{2-\frac3{2\alpha }} \| \A^{\frac\alpha 2} u (s)\|_{L^2}^{\frac3{2\alpha }}
+\| B (s)\|_{L^2}^{2-\frac3{2\beta }} \| \A^{\frac\beta 2} B (s)\|_{L^2}^{\frac3{2\beta }})ds.\nonumber
\end{align}
Arguing in the same manner with the derivation of (\ref{mymy2}), we have the $L^2$ estimate of (\ref{intt2}) derived as
\begin{align}
 \|B&(t)\|_{L^2}-  \|  e^{-t\A^\beta}B_0\|_{L^2}\label{mymy1}
\\
&\le C\int^t_0 (t-s)^{-\frac1{2\beta }}\| u (s)\|_{L^2}^{1-\frac3{4\alpha }} \| \A^{\frac\alpha 2} u (s)\|_{L^2}^{\frac3{4\alpha }}
\| B (s)\|_{L^2}^{1-\frac3{4\beta }} \| \A^{\frac\beta 2} B (s)\|_{L^2}^{\frac3{4\beta }}ds.\nonumber
\end{align}
Integrating (\ref{mymy2})-(\ref{mymy1}) respectively,   we have
\begin{align*}
 &\int^t_0\|u(s)\|_{L^2}ds+ \int^t_0\|B(s)\|_{L^2}ds-\int^t_0\|  e^{-s \A^\alpha}u_0\|_{L^2}ds-\int^t_0\|  e^{-s \A^\beta}B_0\|_{L^2}ds
 \\
 &\le  \!\nonumber C\!\!\!\int^t_0\!\int^r_0\!\! (r\!-\!s)^{-\frac1{2\alpha }}( \| u (s)\|_{L^2}^{2-\frac3{2\alpha }} \| \A^{\frac\alpha 2} u (s)\|_{L^2}^{\frac3{2\alpha }}\!+\!\| B (s)\|_{L^2}^{2-\frac3{2\beta }} \| \A^{\frac\beta 2} B (s)\|_{L^2}^{\frac3{2\beta }})dsdr\nonumber
\\
&+C\int^t_0 \int^r_0(r-s)^{-\frac1{2\beta }}\| u (s)\|_{L^2}^{1-\frac3{4\alpha }} \| \A^{\frac\alpha 2} u (s)\|_{L^2}^{\frac3{4\alpha }}
\| B (s)\|_{L^2}^{1-\frac3{4\beta }} \| \A^{\frac\beta 2} B (s)\|_{L^2}^{\frac3{4\beta }}dsdr.\nonumber
\end{align*}
After changing the integration order and integrating with respect to $r$, we have
\begin{align}
&\int^t_0\|u(s)\|_{L^2}ds+ \int^t_0\|B(s)\|_{L^2}ds-\int^t_0\|  e^{-s \A^\alpha}u_0\|_{L^2}ds-\int^t_0\|  e^{-s \A^\beta}B_0\|_{L^2}ds
\nonumber
\\
&\le   Ct^{1-\frac1{2\alpha }}\!\int^t_0( \| u(s)\|_{L^2}^{2-\frac3{2\alpha }} \| \A^{\frac\alpha 2} u(s)\|_{L^2}^{\frac3{2\alpha }}+\| B(s)\|_{L^2}^{2-\frac3{2\beta }} \| \A^{\frac\beta 2} B(s)\|_{L^2}^{\frac3{2\beta}})ds\!\nonumber
\\
&+Ct^{1-\frac1{2\beta }}\int^t_0 \| u (s)\|_{L^2}^{1-\frac3{4\alpha }} \| \A^{\frac\alpha 2} u (s)\|_{L^2}^{\frac3{4\alpha }}
\| B (s)\|_{L^2}^{1-\frac3{4\beta }} \| \A^{\frac\beta 2} B (s)\|_{L^2}^{\frac3{4\beta }}ds.
\label{decay2}
\end{align}
By  using H\"older inequality  in (\ref{decay2}) and then taking  energy inequality (\ref{energy}) into account, the $L^2$ estimate becomes
\begin{align}
&\int^t_0\|u(s)\|_{L^2}ds+ \int^t_0\|B(s)\|_{L^2}ds-\int^t_0\|  e^{-s \A^\alpha}u_0\|_{L^2}ds-\int^t_0\|  e^{-s \A^\beta}B_0\|_{L^2}ds
\nonumber
\\
 & \leq   Ct^{1-\frac1{2\alpha }}t^{1-\frac3{4\alpha}}\left(\int^t_0\| u (s)\|_{L^2}^{\frac{8\alpha }3-2}\| \A^{\frac\alpha 2} u(s)\|_{L^2}^{2} ds\right)^{\frac3{4\alpha }}\nonumber
 \\
 & + Ct^{1-\frac1{2\alpha }}t^{1-\frac3{4\beta}}\left(\int^t_0\| B (s)\|_{L^2}^{\frac{8\beta }3-2}\| \A^{\frac\beta 2} B(s)\|_{L^2}^{2} ds\right)^{\frac3{4\beta }}\nonumber
 \\
 & + Ct^{1-\frac1{2\beta }}t^{1-\frac3{8\alpha}-\frac3{8\beta}}\left(\int^t_0\| u \|_{L^2}^{\frac{8\alpha }3-2}\| \A^{\frac\alpha 2} u\|_{L^2}^{2} ds\right)^{\frac3{8\alpha }}\!\!\!\left(\int^t_0\| B \|_{L^2}^{\frac{8\beta }3-2}\| \A^{\frac\beta 2} B\|_{L^2}^{2} ds\right)^{\frac3{8\beta }}\nonumber
 \\
  & \le C[t^{2-\frac5{4\alpha }}+t^{2-\frac1{2\alpha }-\frac3{4\beta}}+t^{2-\frac3{8\alpha}-\frac7{8\beta}}]\|(u_0,B_0)\|_{L^2}^2.\label{decayy}
\end{align}

On the other hand, it follows from (\ref{energy}) that
\begin{eqnarray} \|(u,B)(t)\|_{L^2} \le \frac1t\int^t_0 \|(u,B)(s)\|_{L^2}ds. \label{decay5}\end{eqnarray}
Therefore,  equation (\ref{decayy}) becomes
\begin{align}
 \|(u,B)(t)\|_{L^2} & \le \frac1t\int^t_0\|  e^{-s\A^\alpha}u_0\|_{L^2}ds+\frac1t\int^t_0\|  e^{-s \A^\beta}B_0\|_{L^2}ds\nonumber
 \\
 &+ C(t^{1-\frac5{4\alpha }}+t^{1-\frac1{2\alpha }-\frac3{4\beta}}+t^{1-\frac3{8\alpha}-\frac7{8\beta}})
 \nonumber
 \\
 &\le  \frac1t\int^t_0\|  e^{-s\A^\alpha}u_0\|_{L^2}ds+\frac1t\int^t_0\|  e^{-s \A^\beta}B_0\|_{L^2}ds+Ct^{-\frac1{4\alpha }},\label{decay1}
\end{align}
since  $t>1$  is assumed  due to the energy inequality  (\ref{energy}) and
\begin{eqnarray}\label{o1}
\frac5{4\alpha }-1\ge \frac1{4\alpha }, \,\,\, \frac1{2\alpha }+\frac3{4\beta }-1\ge \frac1{4\alpha }, \,\,\,  \frac3{8\alpha }+\frac7{4\beta }-1\ge \frac1{4\alpha },
\end{eqnarray}
due to  $\alpha, \beta \le 1$.
This shows  the desired  $L^2$ decay (\ref{ass1}) due to Lemma \ref{L12}.

\subsection{Proof of assertion (\ref{ass2})}

By assumption (\ref{con1}),
 equation (\ref{decay1}) implies
\begin{eqnarray} \|(u,B)(t)\|_{L^2} \le C t^{-\gamma} + C t^{-\frac1{4\alpha }}.
\end{eqnarray}
Here, we only consider the case $t\ge 1$,  due to the uniform bound  \begin{eqnarray*}\|(u,B)(t)\|_{L^2}\le \|(u_0,B_0)\|_{L^2}\end{eqnarray*} given by  (\ref{energy}). Hence, we obtain (\ref{ass2}) when $\gamma \le \frac1{4\alpha }$.

For $\gamma >\frac1{4\alpha }$,
since $1\ge \alpha  >\frac34$, we set $p, q, r>1$  so that
\begin{eqnarray} \label{dd}\frac1p+ \frac3{4\alpha }=1,\,\,\frac1q+ \frac3{4\beta }=1,\,\,\,
 \frac1r + \frac 3{8\alpha}+ \frac3{8\beta}=1.
\end{eqnarray}
By   (\ref{con1}), (\ref{decay5}), (\ref{energy}) and  H\"older inequality ,   estimate    (\ref{decay2})  becomes
\begin{align}\nonumber
&\|(u,B)(t)\|_{L^2}\nonumber
\\
 &\le Ct^{-\gamma}
 +Ct^{-\frac1{2\alpha }}\nonumber
 \left(\int^t_0\|  u\|_{L^2}^{(2-\frac3{2\alpha })p}ds\right)^{\frac1p}\left(\int^t_0\| \A^{\frac\alpha 2} u\|_{L^2}^{2}ds \right)^{\frac3{4\alpha }}
\\&+Ct^{-\frac1{2\alpha }}\nonumber
 \left(\int^t_0\!\!\|  B\|_{L^2}^{(2-\frac3{2\beta })q}ds\right)^{\frac1q}\left(\int^t_0\!\!\| \A^{\frac\beta 2} B\|_{L^2}^{2}ds \right)^{\frac3{4\beta}}
   \\ & +Ct^{-\frac1{2\beta }}
 (\int^t_0\!\!\|  u\|_{L^2}^{(1-\frac3{4\alpha })r}\|  B\|_{L^2}^{(1-\frac3{4\beta })r}ds)^{\frac1r}
 (\int^t_0\!\!\| \A^{\frac\alpha 2} u\|_{L^2}^{2}ds )^{\frac3{8\alpha}}(\int^t_0\!\!\| \A^{\frac\beta 2} B\|_{L^2}^{2}ds )^{\frac3{8\beta}}\nonumber
 \\
 &\le Ct^{-\gamma}
 +Ct^{-\frac1{2\alpha }}\nonumber
 \left(\int^t_0\|  u\|_{L^2}^{(2-\frac3{2\alpha })p}ds\right)^{\frac1p}
+Ct^{-\frac1{2\alpha }}\nonumber
 \left(\int^t_0\|  B\|_{L^2}^{(2-\frac3{2\beta })q}ds\right)^{\frac1q}
   \\ & +Ct^{-\frac1{2\beta }}
 \left(\int^t_0\|  u\|_{L^2}^{(1-\frac3{4\alpha })r}\|  B\|_{L^2}^{(1-\frac3{4\beta })r}ds\right)^{\frac1r}.\label{decay7}
 \end{align}

 The desired decay estimate will be derived by a  boot strap iteration scheme. To do so, we begin with the step   $\gamma_1=\frac1{4\alpha }$ so that
 \begin{eqnarray} \|(u,B)(t)\|_{L^2}\le C t^{-\gamma_1}.\end{eqnarray}
   Assuming the existence of $\gamma_n$ so that   $0<\gamma_n <\gamma\le \frac12$  for a positive  integer $n$  and   there holds the decay estimate
\begin{eqnarray} \|(u,B)(t)\|_{L^2} \le C t^{-\gamma_n},\label{my000}\end{eqnarray}
we will show  that the previous estimate can be upgrade to the one involving  a  power to be defined as  $-\gamma_{n+1}$. Indeed, the application of  (\ref{dd}) and  (\ref{my000}) into    (\ref{decay7}), we have
 \begin{align}
 &\|(u,B)(t)\|_{L^2}\nonumber
 \\
 &\le Ct^{-\gamma}
 \!+\! Ct^{-\frac1{2\alpha }}\nonumber
 \left(\int^t_0 s^{-\gamma_n(2-\frac3{2\alpha })p}ds\right)^{\frac1p}
\!+\! Ct^{-\frac1{2\alpha }}\nonumber
 \left(\int^t_0s^{-\gamma_n(2-\frac3{2\beta })q}ds\right)^{\frac1q}
   \\ & +Ct^{-\frac1{2\beta }}
 \left(\int^t_0 s^{-\gamma_n(1-\frac3{4\alpha })r}s^{-\gamma_n(1-\frac3{4\beta })r}ds\right)^{\frac1r}\nonumber 
 \\
 &\le C [t^{-\gamma} +t^{-\frac1{2\alpha}}(t^{\frac1p -\gamma_n(2-\frac3{2\alpha })}+t^{\frac1q -\gamma_n(2-\frac3{2\beta })})+t^{-\frac1{2\beta}}t^{\frac1r -\gamma_n(2-\frac3{4\alpha }-\frac3{4\beta})}
 \nonumber
  ]
  \\
  &= C[t^{-\gamma} \!+\!t^{1-\frac5{4\alpha } -\gamma_n(2-\frac3{2\alpha })}\!+\! t^{1-\frac2{4\alpha}-\frac3{4\beta } -\gamma_n(2-\frac3{2\beta })}
  \!+\! t^{1-\frac3{8\alpha }-\frac7{8\beta} -\gamma_n(2-\frac3{4\alpha }-\frac3{4\beta})}\label{xyz}
  ],
 \end{align}
 since $0<2\gamma_n <1$, $\alpha, \beta>\frac34$ and
 \be \gamma_n(2-\frac3{2\alpha})p=\gamma_n(2-\frac3{2\beta})q=\gamma_n(2-\frac3{4\alpha}-\frac3{4\beta})r
=2\gamma_n <1.
\ee
due to (\ref{dd}).
Rewrite (\ref{xyz}) as
\begin{eqnarray}
\|(u,B)(t)\|_{L^2}
  &\le & C(t^{-\gamma} +t^{-a_n}+t^{-b_n}
  +t^{-c_n}\label{xyzaa}
  ),
 \end{eqnarray}
with
 \begin{eqnarray*} a_{n} = (\frac5{4\alpha }-1) +\gamma_n(2-\frac3{2\alpha }),
  \\
   b_{n} = (\frac2{4\alpha }+\frac3{4\beta }-1) +\gamma_n(2-\frac3{2\beta }),
   \\
   c_{n}=\frac3{8\alpha }+\frac7{8\beta}-1 +\gamma_n(2-\frac3{4\alpha }-\frac3{4\beta}).
\end{eqnarray*}
We see that
\bbe
a_{n}-b_{n} = \frac3{4\alpha}-\frac3{4\beta}-2\gamma_n(\frac3{4\alpha}-\frac3{4\beta})=\frac34(\frac1{\alpha}-\frac1{\beta})(1-2\gamma_n),\label{m1}
\\
a_{n}-c_{n} = \frac7{8\alpha}-\frac7{8\beta}-\gamma_n(\frac3{4\alpha}-\frac3{4\beta})=\frac34(\frac1{\alpha}-\frac1{\beta})(\frac76-\gamma_n),\label{m2}
\\
b_{n}-c_{n} = \frac1{8\alpha}-\frac1{8\beta}+\gamma_n(\frac3{4\alpha}-\frac3{4\beta})\label{m3}
=\frac34 (\frac1{\alpha}-\frac1{\beta})(\frac16+\gamma_n).
\bee
This implies
\be a_{n} \ge b_{n}\ge c_{n}, \,\mbox{ when } \frac1\alpha-\frac1\beta>0.
\ee
 Hence we set
\bbe \gamma_{n+1} = c_n, \mbox{ for } \frac1\alpha-\frac1\beta>0.\bee
This together with (\ref{xyzaa}) gives
\bbe \|(u,B)(t)\|_{L^2} \le Ct^{-\gamma}+ Ct^{-\gamma_{n+1}}.\label{my4}\bee
The definition of $\gamma_{n+1}$ shows
\begin{eqnarray*} \gamma_{n+1}=( \frac3{8\alpha }+\frac7{8\beta}-1) \sum_{m=0}^{n-1}(2-\frac3{4\alpha }-\frac3{4\beta})^m + \gamma_1 (2-\frac3{4\alpha }-\frac3{4\beta})^n.
\ee
The limit of this summation is, as $n\to \infty$,
\be
\frac{ \frac3{8\alpha }+\frac7{8\beta}-1}{1-(2-\frac3{4\alpha }-\frac3{4\beta})}=\frac{ \frac3{8\alpha }+\frac7{8\beta}-1}{\frac3{4\alpha }+\frac3{4\beta}-1}\,\,\, \left\{ \begin{array}{c}=\frac12, \mbox{ when } \beta =1,\vspace{2mm}\\
>\frac12, \mbox{ when } \beta <1.\end{array}
\right.
\end{eqnarray*}
Hence, there is a positive  integer $n_0$ so that  $\gamma_{n_0+1} \ge \gamma$. Thus (\ref{my4})  becomes
\bbe \|(u,B)(t)\|_{L^2} \le Ct^{-\gamma} + Ct^{-\gamma_{n_0+1}}\le Ct^{-\gamma}.\label{my5}
\bee

On the other hand, when $\frac1\alpha-\frac1\beta\le 0$, we see from (\ref{m1})-(\ref{m3}) that
\bbe a_{n} \le b_{n}\le c_n.\label{my7}\bee
 We thus define $\gamma_{n+1} = a_n$ and so
\be \gamma_{n+1} &=& (\frac5{4\alpha }-1) +\gamma_n(2-\frac3{2\alpha })
  \\
  &=&(\frac5{4\alpha }-1) \sum_{m=0}^{n-1}(2-\frac3{2\alpha })^m+\gamma_1(2-\frac3{2\alpha })^n.
\ee
The limit of this summation is, as $n\to \infty$,
\begin{eqnarray} (\frac5{4\alpha }-1)\sum_{m=0}^\infty (2-\frac3{2\alpha })^m
=\frac{5-4\alpha}{6-4\alpha}\,\,\, \left\{ \begin{array}{c}=\frac12, \mbox{ when } \alpha =1,\vspace{2mm}\\
>\frac12, \mbox{ when } \alpha <1.\end{array}
\right.
\end{eqnarray}
This together with (\ref{xyzaa}) and (\ref{my7})  implies the existence of an integer $n_0\ge 1$ so that  $\gamma_{n_0+1} \ge \gamma$ and hence
\begin{eqnarray} \|(u,B)(t)\|_{L^2}\le C(t^{-\gamma} + t^{-\gamma_{n_0+1}} )\le C t^{-\gamma}.\end{eqnarray}
This gives (\ref{ass2}) and completes the proof of Theorem \ref{Th1}.

\

\noindent {\bf Acknowledgment}.
This research is supported by The Shenzhen Natural Science Fund of China ( Stable Support Plan Program No. 20220805175116001).

\

\end{document}